\theoremstyle{plain}
\newtheorem{theorem}{Theorem}[section]
\newtheorem{lemma}[theorem]{Lemma}
\newtheorem{proposition}[theorem]{Proposition}
\newtheorem{definition}[theorem]{Definition}
\theoremstyle{remark}
\newtheorem{remark}[theorem]{Remark}
\begin{document}


\title{About the Cuntz Comparison for non-simple $\text{C}^*$-algebras}

\author{George Elliott and Kun Wang}


\subjclass[2000]{46L05}

\begin{abstract}
We study the Cuntz semigroup for non-simple $\text{C}^*$-algebras in this paper. In particular, we use the extended Elliott invariant to characterize the Cuntz comparison for $\text{C}^*$-algebras  with the projection property which have only one ideal. 
\end{abstract}

\maketitle


\newcommand\sfrac[2]{{#1/#2}}
\newcommand\cont{\operatorname{cont}}
\newcommand\diff{\operatorname{diff}}

\section{Introduction}
Classification of $\text{C}^*$-algebras by using the so-called Elliott invariant has been shown to be very successful in many cases. 
  Recent examples due first to R\o rdam and later Toms have shown the
currently proposed invariants to be insufficient for the
classification of all simple, separable, and nuclear
$\mbox{C}^{*}$-algebras. There are simple, separable, and
nuclear $\mbox{C}^{*}$-algebras that can be distinguished by their
Cuntz semigroups but not by their Elliott Invariant. So the Cuntz semigroup has
become popular and important. 

In \cite{BPT}, Brown, Perera and Toms recover the Cuntz semigroup for a class of simple $\text{C}^*$-algebras by using the ingredient of Elliott invariant---the Murry-von Neumann semigroup and lower semi-continuous dimension functions. 
In this paper, we give a characterization of Cuntz comparability for a class of $\text{C}^*$-algebras with only one ideal by using the Murry-von Neumann semigroup and lower semi-continuous dimension functions on the $\text{C}^*$-algebra and on its ideal. 
We will give a example to see that though the extended valued traces can reflect the ideal structure of $\text{C}^*$-algebras, the traces on ideals still have their own meaning.

\section{Cuntz Comparability}
Let $A$ be a $\text{C}^*$-algebra, and let $\text{M}_n(A)$ denote the $n\times n$ matrices whose entries are elements of $A$. Let $\text{M}_{\infty}(A)$ denote the algebraic limit of the direct system $(\text{M}_n(A),\phi_n)$, where $\phi_n:\text{M}_n(A)\rightarrow \text{M}_{n+1}(A)$ is given by
$$a\longmapsto \begin{pmatrix}
   a & 0  \\
   0 & 0
  \end{pmatrix}.$$
  Let $\text{M}_{\infty}(A)_+$ (resp. $\text{M}_{n}(A)$) denote the positive elements in $\text{M}_{\infty}(A)$ (resp. $\text{M}_{n}(A)$). For positive elements $a$ and $b$ in $\text{M}_{\infty}(A)$, write $a\oplus b$ to denote the element $\begin{pmatrix}
   a & 0  \\
   0 & b
  \end{pmatrix},$ which is also positive in $\text{M}_{\infty}(A).$
\begin{definition}\label{Cuntz subeq} Given $a,b\in\text{M}_{\infty}(A)_+$, we say that a is Cuntz subequivalent  to b (written $a\precsim b$) if there is a sequence $(v_n)_{n=1}^{\infty}$ of elements of $\text{M}_{\infty}(A)$ such that $$\|v_nbv_n^*-a\|\xrightarrow[ ]{n\rightarrow\infty}0.$$
\end{definition}
We say that $a$ and $b$ are Cuntz equivalent (written $a\sim b$) if $a\precsim b$ and $b\precsim a$.
This relation is an equivalence relation, and write $\langle a\rangle$ for the equivalence class of $a$. The set $$\text{W}(A):=\text{M}_{\infty}(A)_+/\sim$$
becomes a positively ordered Abelian monoid when equipped with the operation $$\langle a \rangle+\langle b\rangle=\langle a\oplus b\rangle$$ and the partial order $$\langle a\rangle\leq \langle b\rangle\Longleftrightarrow a\precsim b.$$ In the sequel, we refer to this object as the Cuntz semigroup of $A$.

\begin{proposition}([\cite{MR1}, Proposition 2.4])\label{equi} Let $A$ be a $\text{C}^*$-algebra, and let $a,b$ be positive elements in $A$.
The following are equivalent:
\begin{item}
\item
(i) $a\precsim b$,
\item
(ii) for all $\varepsilon>0,$ $(a-\varepsilon)_+\precsim b,$
\item
(iii) for all $\varepsilon>0$, there exists $\delta>0$ such that $(a-\varepsilon)_+\precsim (b-\delta)_+,$
\item
(iv) for all $\varepsilon>0,$ there exists $\delta>0$ and $x$ in $A$ such that $$(a-\varepsilon)_+=x^*(b-\delta)_+x.$$
\end{item}
\end{proposition}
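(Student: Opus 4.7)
The proof is a standard circular implication $(\text{i})\Rightarrow(\text{ii})\Rightarrow(\text{iii})\Rightarrow(\text{iv})\Rightarrow(\text{i})$, driven throughout by one key technical tool, a lemma of R\o rdam: if $a,c\in A_+$ with $\|a-c\|<\varepsilon$, then there exists $d\in A$ such that $(a-\varepsilon)_+=dcd^*$. I would state and invoke this cut-down lemma at the outset, since each of the substantive implications uses it to turn a norm-approximation into an honest Cuntz subequivalence; the proof of the lemma itself is a short functional-calculus computation using the continuous function $t\mapsto (t-\varepsilon)_+$ applied to $a$ and expressing it through $c$.

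For $(\text{i})\Rightarrow(\text{ii})$, given $\varepsilon>0$, pick $n$ with $\|v_nbv_n^*-a\|<\varepsilon$ and apply the cut-down lemma with $c=v_nbv_n^*=(v_nb^{1/2})(v_nb^{1/2})^*$; since $xx^*\sim x^*x$ in the Cuntz order, this yields $(a-\varepsilon)_+\precsim b$. For $(\text{ii})\Rightarrow(\text{iii})$, apply (ii) with $\varepsilon/2$ to get $v$ with $\|vbv^*-(a-\varepsilon/2)_+\|$ as small as we like; since $(b-\delta)_+\to b$ in norm as $\delta\to 0$, we have $v(b-\delta)_+v^*\to vbv^*$, so for some $\delta>0$ the element $v(b-\delta)_+v^*$ lies within $\varepsilon/2$ of $(a-\varepsilon/2)_+$, and a further application of the cut-down lemma produces $(a-\varepsilon)_+=((a-\varepsilon/2)_+-\varepsilon/2)_+\precsim v(b-\delta)_+v^*\precsim (b-\delta)_+$.

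For $(\text{iii})\Rightarrow(\text{iv})$, apply (iii) with $\varepsilon/2$ to find $\delta>0$ and a sequence $w_n$ with $w_n(b-\delta)_+w_n^*\to (a-\varepsilon/2)_+$; choose $n$ with $\|w_n(b-\delta)_+w_n^*-(a-\varepsilon/2)_+\|<\varepsilon/2$ and apply the cut-down lemma once more to obtain $d$ with $(a-\varepsilon)_+=d\,w_n(b-\delta)_+w_n^*\,d^*$. Setting $x=w_n^*d^*$ gives the required equality $(a-\varepsilon)_+=x^*(b-\delta)_+x$. For $(\text{iv})\Rightarrow(\text{i})$, observe that $(b-\delta)_+=h(b)\,b\,h(b)$ where $h(t)=(t-\delta)_+^{1/2}/t^{1/2}$ (extended continuously at $0$), so that $(a-\varepsilon)_+=x^*h(b)\,b\,h(b)x=y^*by$ with $y=h(b)x$, hence $(a-\varepsilon)_+\precsim b$ in the strong sense of an exact factorization; choosing $\varepsilon_n\downarrow 0$, the elements $y_n$ thus obtained satisfy $\|y_n^*by_n-a\|\le\|y_n^*by_n-(a-\varepsilon_n)_+\|+\|(a-\varepsilon_n)_+-a\|\to 0$, giving $a\precsim b$.

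The one step requiring care is the cut-down lemma of R\o rdam, which is the only genuinely non-trivial input; once it is in hand, each implication is a short functional-calculus or continuity argument. The order of the implications matters: passing through (iv) before closing the loop is important because (iv) is the only statement producing an \emph{exact} factorization, which is what allows the final approximation step in $(\text{iv})\Rightarrow(\text{i})$ to converge without any further appeal to the cut-down lemma.
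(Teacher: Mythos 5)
This proposition is quoted in the paper without proof, as a citation of R{\o}rdam [\cite{MR1}, Proposition 2.4], so there is no internal argument to compare against. Your proof is correct and is essentially R{\o}rdam's standard argument: the cut-down lemma ($\|a-c\|<\varepsilon$ implies $(a-\varepsilon)_+=dcd^*$) is indeed the right engine for each implication, and your functional-calculus step $h(b)bh(b)=(b-\delta)_+$ with $h(t)=(t-\delta)_+^{1/2}/t^{1/2}$ correctly closes the loop from (iv) back to (i).
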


\begin{proposition}\label{disum}(\cite{PT}, Proposition 2.2) Let $A$ be a $C^*$-algebra. Let $a,~p\in {\rm{M}}_{\infty}(A)_+$ be such that $p$ is a projection and $p\precsim a$. Then there exists $b\in {\rm{M}}_{\infty}(A)_+$ such that $p\oplus b\sim a.$
\end{proposition}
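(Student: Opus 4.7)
\emph{Plan of proof.} The strategy is first to promote $p\precsim a$ to a Murray--von Neumann equivalence between $p$ and a projection living inside the hereditary subalgebra generated by $a$, and then to split $a$ against that projection.

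Applying Proposition~\ref{equi}(iv) with $0<\varepsilon<1$ and using that $(p-\varepsilon)_+=(1-\varepsilon)p$ for a projection $p$, I would obtain $\delta>0$ and $x\in\text{M}_\infty(A)$ with $(1-\varepsilon)p=x^*(a-\delta)_+x$. Setting $r:=(1-\varepsilon)^{-1/2}(a-\delta)_+^{1/2}x$ yields $r^*r=p$, so $q:=rr^*$ is a projection Murray--von Neumann equivalent to $p$, which by construction lies in $\overline{(a-\delta)_+\text{M}_\infty(A)(a-\delta)_+}\subseteq\overline{a\,\text{M}_\infty(A)\,a}$. In particular $q\sim p$ in the Cuntz semigroup.

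The candidate for $b$ is $b:=(1-q)a(1-q)$, interpreted in the unitization of $\text{M}_\infty(A)$. Since $qb=bq=0$, $q$ and $b$ are orthogonal, giving $p\oplus b\sim q\oplus b\sim q+b$, so the claim reduces to $a\sim q+b$. A useful intermediate fact is $q\sim qaq$: the bound $qaq\leq\|a\|\,q$ gives $qaq\precsim q$, and for the reverse one uses that $a^{1/n}qa^{1/n}\to q$ (because $a$ is strictly positive in $\overline{a\,\text{M}_\infty(A)\,a}$ and $q$ sits in that hereditary subalgebra), together with the standard equivalence $(qa^{1/n})^*(qa^{1/n})\sim(qa^{1/n})(qa^{1/n})^*$ and a short functional-calculus comparison to land inside the Cuntz class of $qaq$.

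The main obstacle is absorbing the off-diagonal cross terms in the decomposition $a=qaq+qa(1-q)+(1-q)aq+b$. My plan is to conjugate $a\oplus 0$ in $\text{M}_2(\text{M}_\infty(A))$ by the self-adjoint unitary $u=\begin{pmatrix}q & 1-q\\ 1-q & -q\end{pmatrix}$ (living in the unitization), realising $a$ up to Cuntz equivalence as the positive $2\times 2$ matrix with diagonal $(qaq,b)$ and off-diagonal $qa(1-q)$. A Schur-complement/perturbation argument should then identify the Cuntz class of this matrix with that of $qaq\oplus b\sim q\oplus b\sim p\oplus b$, completing the proof. I expect this last passage---from the full positive $2\times 2$ matrix to its diagonal at the level of Cuntz classes---to be the delicate point, since the cross terms $qa(1-q)$ need not vanish nor be small in norm.
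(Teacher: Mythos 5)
The paper does not prove this proposition---it is quoted from \cite{PT}, Proposition 2.2---so your attempt can only be measured against the standard argument. Your first two steps are exactly right: producing a projection $q=rr^*\sim p$ inside $\overline{(a-\delta)_+\mathrm{M}_\infty(A)(a-\delta)_+}$ via Proposition \ref{equi}(iv), taking $b=(1-q)a(1-q)$, and reducing the claim to $a\sim q+b$. The genuine gap is the final step, and your own suspicion about it is justified. After conjugating $a\oplus 0$ to $M=\begin{pmatrix} qaq & qa(1-q)\\ (1-q)aq & b\end{pmatrix}$, the passage between $M$ and its diagonal only goes one way for free: $M\le 2(qaq\oplus b)$ gives $M\precsim qaq\oplus b$, but the direction you actually need for $q\oplus b\precsim a$ is $qaq\oplus b\precsim M$, and for a general positive $2\times 2$ operator matrix this is false (already $\begin{pmatrix}1&1\\1&1\end{pmatrix}$ has rank one while its diagonal has rank two). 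So no perturbation or Schur-complement argument that ignores the special position of $q$ can close this; the step is a missing idea, not a routine verification.

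The standard way to finish avoids the matrix entirely and rests on one lemma you do not invoke: \emph{every positive element $c$ of the hereditary subalgebra $\overline{a\,\mathrm{M}_\infty(A)\,a}$ satisfies $c\precsim a$} (with $f_\varepsilon$ as in Lemma \ref{MRL}, $f_{1/n}(a)cf_{1/n}(a)\to c$ and $f_{1/n}(a)cf_{1/n}(a)\le\|c\|f_{1/n}(a)^2\precsim a$, so $(c-\varepsilon)_+\precsim a$ for every $\varepsilon>0$). Since $q$ and $b=a-qa-aq+qaq$ both lie in $\overline{a\,\mathrm{M}_\infty(A)\,a}$, so does $q+b$, and $q\oplus b\sim q+b\precsim a$ follows at once. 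For the converse direction the cross terms you worry about disappear if you split $a$ as $a=a^{1/2}qa^{1/2}+a^{1/2}(1-q)a^{1/2}$: a sum of two positive elements is Cuntz subequivalent to their direct sum, $a^{1/2}qa^{1/2}\sim qaq\le\|a\|q\precsim q$, and $a^{1/2}(1-q)a^{1/2}\sim b$, giving $a\precsim q\oplus b$. This also makes your intermediate claim $q\sim qaq$ (whose proof sketch via $a^{1/n}qa^{1/n}\to q$ is itself only half-justified) unnecessary---only the easy half $qaq\precsim q$ is used.
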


\begin{definition} Let $A$ be a local $C^*$-algebra. A dimension function on $A$ is a mapping $D:A\rightarrow [0,\infty)$ such that:

(i) $\sup\{D(a)|a\in A\}=1$ (normalization).

(ii) If $a\bot b$ (i.e., $ab=ab^*=a^*b=a^*b^*=0$), then $D(a+b)=D(a)+D(b)$.

(iii) For all $a$, $D(a)=D(aa^*)=D(a^*a)=D(a^*)$.

(iv) If $0\leq a\leq b$, then $D(a)\leq D(b)$.

(v) If $a\precsim b$ (i.e., there exists $x_n,~y_n$ with $\{x_nby_n\}$ converging to $a$ in norm), then $D(a)\leq D(b)$.
\end{definition}

\begin{proposition}\label{MR}(\cite{MR2}, Corollary 4.7) Let $A$ be a $C^*$-algebra for which $W(A)$ is almost unperforated (in particular, $A$ could be a $\mathcal{Z}$-absorbing $C^*$-algebra). Let $a,~b$ be positive elements in $A$. Suppose that $a$ belongs to $\overline{AbA}$ and such that $d(a)<d(b)$ for every dimension function $d$ on $A$ with $d(b)=1$. Then $a\precsim b.$
\end{proposition}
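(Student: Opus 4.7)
The plan is to reduce the statement to a semigroup inequality of the form $(n+1)\langle (a-\varepsilon)_+\rangle \leq n\langle b\rangle$ for some $n\in\mathbb{N}$, and then invoke almost unperforation. By Proposition~\ref{equi}(ii) it suffices to prove $(a-\varepsilon)_+ \precsim b$ for every $\varepsilon>0$, so fix such an $\varepsilon$, set $a_\varepsilon := (a-\varepsilon)_+$, and observe that $a_\varepsilon$ lies in the hereditary subalgebra generated by $a$, hence in $\overline{AbA}$.

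The main step is to produce an integer $n\geq 1$ with $(n+1)\langle a_\varepsilon\rangle\leq n\langle b\rangle$ in $W(A)$. To do this, I would argue via dimension functions. For every lower semicontinuous dimension function $d$ on $A$ with $d(b)\in(0,\infty)$, normalize so that $d(b)=1$; the hypothesis gives $d(a)<1$, and since $a_\varepsilon\precsim a$ one has $d(a_\varepsilon)\leq d(a)<d(b)$. Using that the set of such normalized $d$ is compact in the topology of pointwise convergence on the Pedersen ideal generated by $b$, together with a lower-semicontinuity argument applied to $a_\varepsilon$, the pointwise strict inequality can be promoted to a uniform estimate $(n+1)d(a_\varepsilon)\leq n\,d(b)$ valid for all such $d$. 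This is then lifted to the corresponding inequality in $W(A)$ by appealing to the realization of the image of $W(A)$ on the face determined by $b$ as a space of lower-semicontinuous affine functions on the cone of dimension functions, in the spirit of Elliott--Robert--Santiago.

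The central obstacle is precisely this last lifting step: in general an element of $W(A)$ is not recoverable from its pairings with lsc dimension functions, so one must exploit the ideal condition $a\in\overline{AbA}$ (which rules out dimension functions that behave pathologically on the ideal generated by $b$) to make the representation argument work. Once $(n+1)\langle a_\varepsilon\rangle\leq n\langle b\rangle$ has been established in $W(A)$, almost unperforation yields $\langle a_\varepsilon\rangle\leq\langle b\rangle$, that is $a_\varepsilon\precsim b$; since $\varepsilon>0$ was arbitrary, Proposition~\ref{equi} concludes $a\precsim b$.
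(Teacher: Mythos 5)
The paper itself offers no proof of this proposition: it is quoted verbatim from R\o rdam [\cite{MR2}, Corollary 4.7], so there is no in-paper argument to compare against, and your proposal has to be judged on its own terms. Its skeleton is the right one and matches R\o rdam's: reduce to $(a-\varepsilon)_+$ via Proposition \ref{equi}, aim for $(n+1)\langle (a-\varepsilon)_+\rangle\leq n\langle b\rangle$ in $W(A)$, and finish with almost unperforation. The bookends are fine: $a\in\overline{AbA}$ does give $\langle (a-\varepsilon)_+\rangle\leq n\langle b\rangle$ for some $n$, so $\langle b\rangle$ is an order unit for the relevant order ideal of $W(A)$, and almost unperforation does convert $(n+1)x\leq ny$ into $x\leq y$.

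The gap is exactly the step you flag yourself and then do not close: passing from the numerical inequality $(n+1)d((a-\varepsilon)_+)\leq n\,d(b)$, valid for all normalized dimension functions $d$, back to the inequality $(n+1)\langle(a-\varepsilon)_+\rangle\leq n\langle b\rangle$ in $W(A)$. The representation of $W(A)$ by lower semicontinuous affine functions on the cone of dimension functions is not an order embedding in general, and the condition $a\in\overline{AbA}$ does not repair this; saying one ``must exploit the ideal condition to make the representation argument work'' names the obstacle without removing it. R\o rdam's proof avoids the lifting problem entirely by arguing in the contrapositive: his Proposition 3.2, a Goodearl--Handelman type separation theorem for ordered abelian semigroups with order unit, shows that if $(m+1)x\leq my$ fails for \emph{every} $m$, then the (nonempty, compact) state space of $(W,y)$ contains a state $f$ with $f(x)\geq f(y)=1$; applied to $x=\langle(a-\varepsilon)_+\rangle$ and $y=\langle b\rangle$, such a state induces a dimension function $d$ on $A$ with $d(b)=1$ and $d(a)\geq d((a-\varepsilon)_+)\geq 1$, contradicting the hypothesis. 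In other words, the functional is \emph{manufactured from} the failure of the semigroup inequality, rather than the semigroup inequality being \emph{detected by} functionals; your compactness observation enters only to guarantee that the state space is nonempty and compact, not to promote a pointwise strict inequality to a uniform one. As written, the proposal is incomplete at its central step.
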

\begin{lemma}\label{MRL} (see also in \cite{ERS}) Let $A$ be an exact $C^*$-algebra for which $W(A)$ is almost unperforated (in particular, $A$ could be a $\mathcal{Z}$-absorbing $C^*$-algebra). Let $a,~b$ be positive elements in $A$. Suppose that $a$ belongs to $\overline{AbA}$ and such that $d_{\tau}(a)<d_{\tau}(b)$ for every dimension function $d_{\tau}$ on $A$ with $d_{\tau}(b)=1$. Then $a\precsim b.$
\end{lemma}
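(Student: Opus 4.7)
The plan is to deduce Lemma~\ref{MRL} directly from Proposition~\ref{MR}. The two statements differ only in the class of dimension functions against which the strict comparison is assumed: Proposition~\ref{MR} demands it for \emph{every} dimension function, whereas Lemma~\ref{MRL} only demands it for those of the form $d_\tau$ coming from a lower semicontinuous trace. The additional hypothesis of exactness is precisely what is needed to bridge this gap.

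First, I would invoke the Blackadar--Handelman correspondence, which identifies (normalized, lower semicontinuous) dimension functions on $A$ with (normalized, lower semicontinuous) $2$-quasitraces: every such $D$ arises as $d_\sigma(x) = \lim_{n\to\infty}\sigma(x^{1/n})$ for a unique $2$-quasitrace $\sigma$ on $A$. Next, Haagerup's theorem---in the form extended by Kirchberg to the non-unital case and to lower semicontinuous quasitraces in the ERS reference cited in the statement---says that on an exact $C^*$-algebra every lower semicontinuous $2$-quasitrace is already a trace. Combined, these two facts show that every dimension function on the exact $C^*$-algebra $A$ is of the form $d_\tau$ for some lower semicontinuous trace $\tau$ on $A$.

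Consequently, the hypothesis of Lemma~\ref{MRL}---namely $d_\tau(a) < d_\tau(b)$ for every trace-induced dimension function with $d_\tau(b) = 1$---is precisely the hypothesis of Proposition~\ref{MR} applied to the same pair $(a,b)$. Since $a \in \overline{AbA}$ and $W(A)$ is almost unperforated by assumption, Proposition~\ref{MR} yields $a \precsim b$.

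The main obstacle is nothing more than citing these two correspondences in the correct generality (non-unital algebras, lower semicontinuous dimension functions); once that is done, the argument is essentially a packaging of known results, with exactness used solely to upgrade quasitraces to traces.
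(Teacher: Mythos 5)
There is a genuine gap in your reduction. The Blackadar--Handelman theorem ([\cite{BH}, Theorem II.2.2]) identifies \emph{lower semicontinuous} dimension functions with (normalized) $2$-quasitraces; it does not say that \emph{every} dimension function arises as $d_\sigma(x)=\lim_n\sigma(x^{1/n})$. A general dimension function (i.e.\ a general state on $W(A)$) need not be lower semicontinuous --- whether $LDF(A)$ is even dense in $DF(A)$ is one of the Blackadar--Handelman conjectures --- so your claim that ``every dimension function on the exact $C^*$-algebra $A$ is of the form $d_\tau$'' is false as stated. Exactness (via Haagerup/Kirchberg) only upgrades quasitraces to traces; it does nothing to address the lower semicontinuity issue. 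Consequently you have not verified the hypothesis of Proposition \ref{MR}, which requires $d(a)<d(b)$ for \emph{every} dimension function $d$ with $d(b)=1$, not only for the trace-induced ones.

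The paper closes exactly this gap. Given an arbitrary dimension function $d$ with $d(b)=1$, it forms the lower semicontinuous regularization $\bar d(\langle x\rangle)=\lim_{\varepsilon\to 0}d(\langle f_\varepsilon(x)\rangle)$; this $\bar d$ \emph{is} lower semicontinuous, hence of the form $d_\tau$ by Blackadar--Handelman (and exactness), so the hypothesis applies to it after normalizing by $\bar d(b)$. One then only gets control of $d$ on the cut-downs $f_\varepsilon(a)$, via
$$d(\langle f_\varepsilon(a)\rangle)\leq \bar d(\langle a\rangle)<\bar d(\langle b\rangle)\leq d(\langle b\rangle),$$
not on $a$ itself, since $d(\langle a\rangle)$ may exceed $\bar d(\langle a\rangle)$. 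Accordingly, Proposition \ref{MR} is applied to the pair $(f_\varepsilon(a),b)$ for each $\varepsilon>0$ (together with the separate case $\bar d(b)=0$), yielding $f_\varepsilon(a)\precsim b$ for all $\varepsilon$ and hence $a\precsim b$. If you want to salvage your argument, you must either prove that every dimension function relevant here is lower semicontinuous (which you cannot in this generality) or adopt the regularization-and-$f_\varepsilon$ detour as the paper does.
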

\begin{proof}Suppose $d$ is any dimension function on $A$ with $d(b)=1$. Let $$\bar{d}(\langle x\rangle)=\lim\limits_{\varepsilon\rightarrow 0}d(\langle f_{\varepsilon}(x)\rangle),$$ where
$$
f_{\varepsilon}(t) = \left\{
  \begin{array}{l l}
     0, & \quad \text{$t\leq \varepsilon$}\\
     \frac{t-\varepsilon}{\varepsilon}, & \quad \text{$\varepsilon\leq t\leq 2\varepsilon$}\\
     1, & \quad {t\geq 2\varepsilon}.
   \end{array} \right.
$$
Then $\bar{d}$ is a lower semi-continuous dimension function on $A$. Therefore, $\bar{d}=d_{\tau}$ for some $\tau\in \text{T}A$. (This follows from Blackadar and Handelman, [\cite{BH}, Theorem II.2.2].)

If $\bar{d}(b)=0$, then $a\in \overline{AbA}$ implies $a\precsim b\otimes 1_k$ for some integer $k$. Therefore, $\bar{d}(a)=0$ since $0\leq\bar{d}(a)\leq k\bar{d}(b)=0$. Hence $d(\langle f_{\varepsilon}(a)\rangle)\leq \bar{d_{\tau}}(a)=0$ for any $\varepsilon>0$.

If $\bar{d}(b)\neq 0$, then let $l\triangleq {\bar{d}}/{\bar{d}(b)}$, which is a lower semi-continuous dimension function with $l(b)=1.$ Then by the assumption
$$d(\langle f_{\varepsilon}(a)\rangle)/{\bar{d}(b)}\leq l(\langle a\rangle)<l(\langle b\rangle)\leq d(\langle b\rangle)/{\bar{d}(b)}.$$ Hence
$$d(\langle f_{\varepsilon}(a)\rangle)\leq \bar{d}(\langle a\rangle)<\bar{d}(\langle b\rangle)\leq d(\langle b\rangle).$$

In either case, we have $$d(\langle f_{\varepsilon}(a)\rangle)<d(\langle b\rangle) ~~~~~~~~~\text { for  }\forall\varepsilon>0.$$ Since $a\in \overline{AbA}$, $f_{\varepsilon}(a)\in \overline{AbA}$. Therefore, by Proposition \ref{MR} $$f_{\varepsilon}(a)\precsim b~~~~~~~~~\text { for  }\forall\varepsilon>0.$$
Hence $a\precsim b.$

\end{proof}

\begin{definition}\label{A++} Let $A$ be a $C^*$-algebra with the ideal property. Let $A_{++}$ be the set of $A_+$ consisting of all positive elements which are not Cuntz equivalent to a non-zero projection in any quotient of $A$.
\end{definition}

\begin{lemma}\label{Projl} Let $A$ be an exact $C^*$-algebra for which $W(A)$ is almost unperforated (in particular, $A$ could be a $\mathcal{Z}$-absorbing $C^*$-algebra). Let $a\in A_{++}$ and $p$ be a projection in $A$. Then $p\precsim a$ if and only if $p\in\overline{AaA}$ and $d_{\tau}(p)<d_{\tau}(a)$ for each ${\tau}\in {\rm{T}}A$ with $d_{\tau}(a)=1$.
\end{lemma}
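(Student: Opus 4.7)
The plan is to handle the two implications separately: the $(\Leftarrow)$ direction should fall out immediately from Lemma \ref{MRL}, while the $(\Rightarrow)$ direction requires an ideal-quotient argument invoking the hypothesis $a\in A_{++}$.

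First I would observe that the $(\Leftarrow)$ direction is precisely Lemma \ref{MRL} applied with the roles of the ``$a$'' and ``$b$'' there played by $p$ and $a$: the hypotheses become $p\in\overline{AaA}$ and $d_\tau(p)<d_\tau(a)$ for every $\tau\in\text{T}A$ with $d_\tau(a)=1$, and the conclusion is exactly $p\precsim a$.

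For the $(\Rightarrow)$ direction, from $p=\lim_n v_n a v_n^*$ one reads off $p\in\overline{AaA}$ at once, and the monotonicity axiom of dimension functions gives $d_\tau(p)\leq d_\tau(a)$ for every $\tau$. The strict inequality when $d_\tau(a)=1$ will be proved by contradiction: suppose some $\tau\in\text{T}A$ satisfies $d_\tau(a)=1=d_\tau(p)$. Proposition \ref{disum} produces $b\in\text{M}_\infty(A)_+$ with $a\sim p\oplus b$, and additivity of $d_\tau$ across the orthogonal sum forces $d_\tau(b)=0$. Let $J$ be the closed two-sided ideal of $A$ generated by $b$ (equivalently, the smallest such $J$ with $b\in\text{M}_\infty(J)$), and let $\pi\colon A\to A/J$ denote the quotient map. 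Since $\pi(b)=0$ in $\text{M}_\infty(A/J)$, Cuntz equivalence descends to give $\pi_*(a)\sim\pi_*(p)\oplus 0\sim\pi_*(p)$ in $W(A/J)$. Provided $\pi_*(p)\neq 0$, this exhibits $\pi_*(a)$ as Cuntz equivalent to a non-zero projection in the quotient $A/J$, directly contradicting $a\in A_{++}$.

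The step I expect to be the main obstacle is verifying $p\notin J$, i.e., that the image of $p$ in $A/J$ is non-zero. The auxiliary fact required is the standard one that every projection lying in the closed two-sided ideal generated by a positive element $b$ is Cuntz-subequivalent to $b\otimes 1_n$ for some $n$; once this is in hand, applying $d_\tau$ yields $d_\tau(p)\leq n\cdot d_\tau(b)=0$, contradicting $d_\tau(p)=1$, so $p\notin J$. This auxiliary fact can be established by approximating $p$ in norm by a sum $\sum x_i b y_i$, applying the R\o rdam perturbation estimate together with Proposition \ref{equi}(iv) to obtain an identity of the form $(p-\varepsilon)_+=x^*(b\otimes 1_n)x$, and noting $(p-\varepsilon)_+\sim p$ for $\varepsilon<1$; it should be either cited explicitly or sketched in the write-up rather than invoked silently.
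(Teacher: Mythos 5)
Your proposal is correct and follows essentially the same route as the paper: both directions use Lemma \ref{MRL} for sufficiency, and for necessity both apply Proposition \ref{disum} to write $a\sim p\oplus b$, pass to the quotient by the ideal generated by $b$, invoke $a\in A_{++}$ to force the image of $p$ to vanish there, and use the fact that a projection in the ideal generated by $b$ satisfies $d_\tau(p)\le n\,d_\tau(b)$. The only difference is that you organize the trace inequality as a proof by contradiction while the paper argues the contrapositive directly; the content is identical.
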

\begin{proof} The reverse direction immediately follows from Lemma \ref{MRL}. Now suppose $p\precsim a$, then by Proposition \ref{disum}, there exists a positive element $c$ with $p\oplus c\sim a.$ Considering the quotient $A/(c)$, where $(c)$ stands for the ideal generated by $c$, we have $a\sim p$ in $A/(c)$.
Therefore, $0=a=p$ in $A/(c)$. Hence $p\in (c)$.

If $d_{\tau}(c)=0$, then $d_{\tau}(p)=0$, which implies $d_{\tau}(a)=0.$ Therefore, if $d_{\tau}(a)\neq 0$, then $d_{\tau}(c)\neq 0$, hence $d_{\tau}(p)<d_{\tau}(a).$

\end{proof}

\section{}

\begin{lemma}
Let $A$ be a $\text{C}^*$-algebra such that $0\rightarrow I \xrightarrow[ ]{\iota}A \xrightarrow[ ]{\pi} A/I\rightarrow 0$ is a short
exact sequence. Let $q\in A$ satisfying $\pi(q)$ is a projection in $A/I$ but $q$ is not a projection in $A$. Suppose $a\in A_{++}$. Then $q\precsim a$ if and only if \\
 (1) $d_{\tau}(q)<d_{\tau}(a) \mbox{ for every }\tau\in \text{T}A \mbox{ with }\ker\tau=I,$ and \\
 (2) $d_{\tau}(q)\leq d_{\tau}(a)$ for every $\tau\in\text{T}A$ with $\ker(\tau)=\{0\}$.
\end{lemma}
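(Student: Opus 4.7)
My plan is to prove the two implications separately, reducing the forward direction to Lemma \ref{Projl} applied in the quotient $A/I$ and the reverse direction to Lemma \ref{MRL} applied in $A$ itself. The asymmetry between conditions (1) and (2) in the statement reflects the fact that while $\pi(q)$ is an actual projection in $A/I$, the element $q$ itself is not a projection in $A$, so faithful traces cannot see the same projection-like rigidity that the quotient traces do.

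For the forward direction, (2) is immediate from monotonicity of dimension functions under Cuntz subequivalence. For (1), let $\tau \in \text{T}A$ satisfy $\ker \tau = I$, so $\tau$ factors through a faithful lower-semicontinuous trace $\bar\tau$ on $A/I$ with $d_\tau = d_{\bar\tau} \circ \pi$. Since any quotient of $A/I$ is also a quotient of $A$, one has $\pi(a) \in (A/I)_{++}$; moreover, $\pi(q)$ is a projection in $A/I$ and $\pi(q) \precsim \pi(a)$. After rescaling so that $d_{\bar\tau}(\pi(a)) = 1$, Lemma \ref{Projl} applied in $A/I$ delivers the strict inequality $d_\tau(q) < d_\tau(a)$.

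For the reverse direction, my goal is to verify the hypotheses of Lemma \ref{MRL} for the pair $q$ and $a$: namely, $q \in \overline{AaA}$, and $d_\tau(q) < d_\tau(a)$ for every lower-semicontinuous dimension function with $d_\tau(a) = 1$. Because $A$ has $I$ as its only non-trivial ideal, any nonzero lower-semicontinuous trace on $A$ has kernel in $\{0, I\}$, so conditions (1) and (2) together cover all such $\tau$. The ideal membership $q \in \overline{AaA}$ can be extracted from (1) and (2) by a trace-separation argument over the restricted ideal lattice: otherwise $\overline{AaA}$ would be either $\{0\}$ or $I$, and a trace witnessing this exclusion would contradict one of the two inequalities.

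The main obstacle is upgrading the non-strict inequality (2) to a strict inequality for faithful traces $\tau$ (those with $\ker\tau=\{0\}$). Here the hypothesis that $q$ is not a projection in $A$ enters essentially: since $\pi(q)$ is a projection, the element $g(q) := q - q^2$ lies in $I$ and is nonzero because $q$ is not itself a projection. My plan is to argue by contradiction: assuming equality $d_\tau(q) = d_\tau(a)$ for some faithful $\tau$ with $d_\tau(a) = 1$, I will combine Proposition \ref{disum} applied to the already-established $\pi(q) \precsim \pi(a)$ in $A/I$ with a lift of that subequivalence obtained via Proposition \ref{equi}(iv), and then show that the resulting residue term in $\text{M}_{\infty}(I)$ forces $a$ to be Cuntz equivalent to a projection in some quotient of $A$, contradicting $a \in A_{++}$. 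Executing this contradiction cleanly, while keeping precise track of the interaction between the ideal residue $g(q)$ and the faithful trace $\tau$, is the step where I expect the technical difficulty to lie.
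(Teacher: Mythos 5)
Your forward direction is sound and amounts to the paper's argument in different packaging: the paper applies Proposition~\ref{disum} directly to $\pi(q)\precsim\pi(a)$ in $A/I$ and uses that $[a]$ is not a projection (so the complementary summand $[b]$ is nonzero and has positive mass under the faithful trace induced on $A/I$), while you invoke Lemma~\ref{Projl} in $A/I$, whose proof is that same computation. The genuine gap is in the reverse direction. Your plan hinges on upgrading (2) to the strict inequality $d_{\tau}(q)<d_{\tau}(a)$ for faithful $\tau$ by deriving a contradiction from equality, but no such contradiction exists: the lemma states (2) with ``$\leq$'' precisely because equality for faithful traces is compatible with $q\precsim a$ --- the forward direction never produces more than ``$\leq$'' for such traces, so if your contradiction argument worked, (2) could be restated with a strict inequality, which is not the case. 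Moreover, the ingredient you want to feed into it, ``the already-established $\pi(q)\precsim\pi(a)$,'' is not available in this direction; it was established under the hypothesis $q\precsim a$, which is now the conclusion, and recovering it from (1) would require the reverse direction of Lemma~\ref{Projl} in $A/I$ (with its extra exactness and unperforation hypotheses), after which the promised contradiction with $a\in A_{++}$ still does not materialize.

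The idea you are missing is that one should not compare $q$ itself with $a$ via Lemma~\ref{MRL}, but rather the cut-downs $(q-\varepsilon)_{+}$. Since $\pi(q)$ is a projection while $q$ is not, the paper observes that $0$ is not an isolated point of $\sigma(q)$; hence for a faithful $\tau$ there is spectral mass of $q$ strictly between $0$ and $\varepsilon$, giving $d_{\tau}((q-\varepsilon)_{+})<d_{\tau}(q)\leq d_{\tau}(a)$. Combined with (1), this yields $d_{\tau}((q-\varepsilon)_{+})<d_{\tau}(a)$ for every relevant trace, whence $(q-\varepsilon)_{+}\precsim a$ by Lemma~\ref{MRL}, and letting $\varepsilon\rightarrow 0$ gives $q\precsim a$. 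This sidesteps the equality case entirely. (You do correctly flag the need to verify $q\in\overline{AaA}$, a point the paper passes over in silence, but that does not rescue the strictness step.)
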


\begin{proof}
Suppose $q\precsim a$. Then $d_{\tau}(q)\leq d_{\tau}(a)$ for all $\tau\in\text{T}A$.
Let $[x]=\pi(x)$ be the equivalent class of $x$ in $A/I$ for any $x\in A.$
Since $[q]\precsim [a]$ in $A/I$ and $[q]$ is a projection, by Proposition \ref{disum}, there exists $b\in M_{\infty}(A)$
such that $$[q]\oplus[b]\sim[a].$$
Since $a\in A_{++}$, $[a]$ is not a projection.
So $[b]\neq 0.$
If $\tau\in \text{T}A$ with $\ker(\tau)=I,$ then $\tau $ is a lower semi-continuous trace on $A/I.$
Therefore, $$d_{\tau}([q])+d_{\tau}([b])=d_{\tau}([a]).$$
Thus, $d_{\tau}(q)<d_{\tau}(a)$ for $\tau\in \text{T}A$ with $\ker\tau=I$.

Now suppose the condition (1) and (2) hold. 
If $\tau\in \text{T}A$ satisfying $\ker(\tau)=0$, then $\tau$ is a faithful trace on $A$.
Since $q$ is not a projection in $A$, $0\in\sigma(q)$ is not an isolated point.
For any $\varepsilon>0,$ there exists $\delta\in\sigma(q)$ such that
$$d_{\tau}((q-\varepsilon)_+)<d_{\tau}((q-\delta)_+).$$
Thus $$d_{\tau}((q-\varepsilon)_+)<d_{\tau}((q)\leq d_{\tau}(a)\mbox{ for all }\tau\in\text{T}A \mbox{ with }\ker\tau=\{0\}.$$
Therefore, $$d_{\tau}((q-\varepsilon)_+)<d_{\tau}(a)\mbox{ for all }\tau\in\text{T}A.$$
By Lemma \ref{MRL}, $$(q-\varepsilon)_+\precsim a\mbox{ for all } \varepsilon>0.$$
Thus, $q\precsim a.$

\end{proof}

\begin{remark}
For the second part of above proof, $a$ can be any positive element. 
That is, we do not require  $a$ that belong to $A_{++}$.
\end{remark}

\begin{lemma}
Let $A $ be an exact, $\mathcal{Z}$-stable $\text{C}^*$-algebra. Suppose $A$ has only one non-trivial ideal $I$, which is also exact and $\mathcal{Z}$-stable. Let $a\in A_{++}$ and $b\in A_{+}$. Then $a\precsim b$ if and only if
$d_{\tau}(a)\leq d_{\tau}(b)$ for every $\tau\in \text{T}A$, 
\end{lemma}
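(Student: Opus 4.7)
The forward direction is immediate since any dimension function is monotone under $\precsim$. For the converse, fix $\varepsilon>0$; by Proposition \ref{equi} it suffices to show $(a-\varepsilon)_+\precsim b$, and the plan is to apply Lemma \ref{MRL}, which requires (i) $(a-\varepsilon)_+\in\overline{AbA}$, and (ii) the strict inequality $d_{\tau}((a-\varepsilon)_+)<d_{\tau}(b)$ for every $\tau\in\text{T}A$ with $d_\tau(b)=1$.

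For (ii), I would classify $\tau\in\text{T}A$ by its kernel, which is an ideal of $A$ and hence equal to $\{0\}$, $I$, or $A$. The case $\ker\tau=A$ is vacuous since then $d_\tau(b)=0\ne 1$; in the remaining two cases $\tau$ is either faithful on $A$ or descends to a non-zero, hence faithful (by simplicity of $A/I$), trace on $A/I$. Since $a\in A_{++}$ is Cuntz equivalent to no projection in $A$ or in any quotient, $0$ is a non-isolated point of both $\sigma(a)$ and $\sigma(\pi(a))$ (whenever $\pi(a)\ne 0$); combined with faithfulness of the relevant trace, a standard spectral argument gives $d_\tau((a-\varepsilon)_+)<d_\tau(a)\le d_\tau(b)$, which is the strict inequality required. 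When $\pi(a)=0$ and $\ker\tau=I$, the strict inequality $0=d_\tau(a)<1=d_\tau(b)$ is automatic.

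For (i), it suffices to show $a\in\overline{AbA}$, since then $(a-\varepsilon)_+$ lies in $C^*(a)\subseteq\overline{AbA}$. If $b\notin I$, then $\overline{AbA}$ is a non-zero $A$-ideal not contained in $I$, so by the hypothesis that the only ideals of $A$ are $\{0\},I,A$ it must equal $A\ni a$. If $b\in I$, then $d_\tau(b)=0$ for every $\tau\in\text{T}A$ with $\ker\tau=I$; the hypothesis forces $d_\tau(a)=0$, and faithfulness of the induced trace $\bar\tau$ on the simple $\mathcal{Z}$-stable quotient $A/I$ then forces $\pi(a)=0$, so $a\in I$, and again $\overline{AbA}$ is a non-zero $A$-ideal contained in $I$, hence equals $I\ni a$. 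The main obstacle I anticipate is the last implication in the subcase $b\in I$: the step $d_{\bar\tau}(\pi(a))=0\Rightarrow \pi(a)=0$ relies on $A/I$ admitting a faithful densely defined lower semi-continuous trace, which is automatic when $A/I$ is stably finite but breaks down if $A/I$ is simple and purely infinite; in that edge case $\text{T}A$ contains no traces with $\ker\tau=I$, so the hypothesis provides no information on $\pi(a)$ versus $\pi(b)$, and one would instead need to exploit pure infiniteness of $A/I$ directly to obtain $\pi(a)\precsim\pi(b)$ and then lift back to $A$.
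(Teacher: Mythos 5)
Your argument is correct and runs on the same engine as the paper's proof: apply Lemma \ref{MRL} to $(a-\varepsilon)_+$ versus $b$, and extract the needed strict inequality from the fact that $a\in A_{++}$ makes $0$ a non-isolated point of $\sigma(a)$ (and of $\sigma(\pi(a))$ when $\pi(a)\neq 0$), combined with faithfulness of the trace on $A$ or on the simple quotient $A/I$. The organization differs: the paper cases out on whether $a$ and $b$ lie in $I$, disposes of $a,b\in I$ by restricting to the simple algebra $I$ and citing Proposition 2.6 of \cite{PT}, and interleaves two spectral sequences $\varepsilon_n\in\sigma(a)$ and $\delta_m\in\sigma(\pi(a))$; you case out on $\ker\tau\in\{0,I,A\}$ and treat everything uniformly through Lemma \ref{MRL}, which is cleaner and makes the appeal to \cite{PT} unnecessary. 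You also do two things the paper skips. First, you verify the hypothesis $a\in\overline{AbA}$ of Lemma \ref{MRL}; the paper never checks it, and when $b\in I$ this genuinely requires your argument that $a\in I$. Second, you flag the configuration $b\in I$, $a\notin I$: the paper's three cases ($a,b\in I$; $a\in I$, $b\notin I$; $a,b\notin I$) silently omit it, and it is exactly where the lemma is in danger --- if $A/I$ admits no faithful lower semicontinuous trace (e.g.\ is purely infinite) there are no $\tau\in\mathrm{T}A$ with $\ker\tau=I$, the hypothesis $d_\tau(a)\le d_\tau(b)$ can then hold with $\pi(a)\neq 0=\pi(b)$, and $a\not\precsim b$. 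So your ``edge case'' is not a defect of your write-up but a gap in the statement and proof as given in the paper, which implicitly assumes enough traces to see $A/I$. The only small addition needed is the trivial case $a=0$ (which the paper treats separately), since there $0$ is isolated in $\sigma(a)$ and your spectral argument does not literally apply.
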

\begin{proof}
The sufficiency is clear.

If $a=0$, then the conclusion is true. So we assume $a\neq 0.$

If $a,b\in I$, then for any $\tau\in\text{T}I$, $\tau$ can be extended to an element in $\text{T}A$.
Thus, $d_{\tau}(a)\leq d_{\tau}(b)$ for every $\tau\in \text{T}A$ implies
$d_{\tau}(a)\leq d_{\tau}(b)$ for every $\tau\in \text{T}I$.
Since $I$ is simple, $a\precsim b$ follows from the Proposition 2.6 of \cite{PT}.

Since $a\in A_{++}$ and $a\neq0,$ there is a strictly decreasing sequence $\varepsilon_n$ of positive real numbers in $\sigma(a)$ converge to 0.

If $a\in I,~b\notin I$, then $d_{\tau}((a-\varepsilon_n)_+)\leq d_{\tau}(a)=0<d_{\tau}(b)$ for all $\tau\in\text{T}A$ with $\ker\tau=I$ and all $n\in\mathbb{N}$.
For $\tau\in\text{T}A$ with $\ker\tau=\{0\},$ we have
$$d_{\tau}((a-\varepsilon_n)_+)<d_{\tau}(a)\leq d_{\tau}(b) \mbox{ for all } n\in\mathbb{N}.$$
Therefore, in this case we can get
$$d_{\tau}((a-\varepsilon_n)_+)<d_{\tau}(b) \mbox{ for all }\tau\in \text{T}A,\mbox{ and all }n\in\mathbb{N}.$$
By Lemma \ref{MRL}, $(a-\varepsilon_n)_+\precsim b$ for all $n\in\mathbb{N}$.
Since the set $\{x\in A_+|x\precsim b\}$ is closed, and since $(a-\varepsilon_n)\rightarrow a$ in norm, we have $a\precsim b.$

If $a\notin I,$ and $b\notin I,$ for $\tau\in\text{T}A$ with $\ker\tau=\{0\},$ we have
$$d_{\tau}((a-\varepsilon_n)_+)<d_{\tau}(a)\leq d_{\tau}(b) \mbox{ for all } n\in\mathbb{N}.$$
For any $\tau\in\text{T}A$ with $\ker\tau=I$, then $\tau$ is a lower semi-continuous trace on $A/I$ and $d_{\tau}([a])\leq d_{\tau}([b]).$
Since $a\in A_{++}$ and $[a]\neq0,$ there is a strictly decreasing sequence $\delta_m$ of positive real numbers in $\sigma([a])$ converge to 0. Since $A/I$ is simple, every trace on $A/I$ is faithful.
$$d_{\tau}(([a]-\delta_n)_+)<d_{\tau}([a])\leq d_{\tau}([b])$$
  for all  $\tau\in \text{T}(A)$ with $\ker\tau=I$ { and all } $m\in\mathbb{N}.$
  Therefore, $$d_{\tau}((a-\delta_n)_+)< d_{\tau}(b)$$
  for all  $\tau\in \text{T}(A)$ with $\ker\tau=I$ { and all } $m\in\mathbb{N}.$
  Since both $\{\varepsilon_n\}$ and $\{\delta_m\}$ are sequences converging to 0, there exist subsequence $n_k$ and $m_k$
  such that $\varepsilon_{n_1}\geq\delta_{m_1}\geq\varepsilon_{n_2}\geq\delta_{m_2}\cdots.$
Without loss of generality, we can assume $n_k=m_k=k.$
Therefore, $$d_{\tau}((a-\varepsilon_n)_+)< d_{\tau}(b)\mbox{ for all }\tau\in\text{T}A \mbox{ and all }n\in\mathbb{N}.$$
Thus, $(a-\varepsilon_n)_+\precsim b$ for all $n\in\mathbb{N.}$ So $a\precsim b.$

\end{proof}

\section{  }

\begin{lemma}\label{dir}
Let $A$ be a $\text{C}^*$-algebra and $x,y\in A$ be two positive elements.
For any $\varepsilon>0$, there exists $\delta>0$ such that:
if $$\|x^{\frac{1}{2}}y^{\frac{1}{2}}\|<\min\{\delta,\frac{\delta}{\|x+y\|},1\},$$ then
$$[\begin{pmatrix}
   x & 0  \\
   0 & y
  \end{pmatrix}-\varepsilon]_+\precsim \begin{pmatrix}
   x+y & 0  \\
   0 & 0
  \end{pmatrix}.$$
\end{lemma}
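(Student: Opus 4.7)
The plan is to realize the right-hand side $\begin{pmatrix} x+y & 0 \\ 0 & 0 \end{pmatrix}$ as $ss^{*}$ for a natural row element $s$ whose $s^{*}s$ differs from $\begin{pmatrix} x & 0 \\ 0 & y \end{pmatrix}$ only by a small, off-diagonal perturbation, and then invoke the standard R\o rdam-style approximation result in the Cuntz semigroup. Concretely, I would set
\[
s=\begin{pmatrix} x^{1/2} & y^{1/2} \\ 0 & 0 \end{pmatrix}\in M_{2}(A),
\]
for which a direct matrix computation yields
\[
s^{*}s=\begin{pmatrix} x & x^{1/2}y^{1/2} \\ y^{1/2}x^{1/2} & y \end{pmatrix},\qquad ss^{*}=\begin{pmatrix} x+y & 0 \\ 0 & 0 \end{pmatrix}.
\]
The difference $s^{*}s-\begin{pmatrix} x & 0 \\ 0 & y \end{pmatrix}$ is a self-adjoint off-diagonal element whose operator norm is exactly $\|x^{1/2}y^{1/2}\|$.

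Next I would appeal to the standard perturbation lemma in the Cuntz semigroup (a classical result of R\o rdam, closely related to Proposition \ref{equi}): if $a,b\ge 0$ satisfy $\|a-b\|<\eta$, then $(a-\eta)_{+}\precsim b$. Applied to $a=\begin{pmatrix} x & 0\\ 0 & y\end{pmatrix}$, $b=s^{*}s$, and $\eta=\varepsilon$, this gives
\[
\Bigl[\begin{pmatrix} x & 0 \\ 0 & y \end{pmatrix}-\varepsilon\Bigr]_{+}\precsim s^{*}s\sim ss^{*}=\begin{pmatrix} x+y & 0 \\ 0 & 0 \end{pmatrix}
\]
as soon as $\|x^{1/2}y^{1/2}\|<\varepsilon$. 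Thus the core of the argument needs only $\delta\le\varepsilon$, and the conclusion of the lemma follows.

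The remaining ingredients of the hypothesis, $\|x^{1/2}y^{1/2}\|<\delta/\|x+y\|$ and $\|x^{1/2}y^{1/2}\|<1$, appear to be technical cushions that emerge when one upgrades the abstract Cuntz equivalence $s^{*}s\sim ss^{*}$ to an explicit conjugation witness in the sense of Proposition \ref{equi}(iv). Building such a witness via functional calculus applied to $(ss^{*})^{1/2}$ on its spectrum typically introduces factors of order $\|s\|^{2}=\|x+y\|$, which is precisely the scale at which $\delta/\|x+y\|$ appears in the stated bound. I expect this bookkeeping, rather than the conceptual content captured above, to be the main technical friction in writing out the detailed proof.
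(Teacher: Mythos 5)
Your argument is correct, and it is genuinely different from (and cleaner than) the paper's. The paper first replaces $\operatorname{diag}(x,y)$ by $\operatorname{diag}(x^2,y^2)$ up to Cuntz comparison and then conjugates $\operatorname{diag}(x+y,0)$ by the symmetric matrix $\bigl(\begin{smallmatrix} x^{1/2} & y^{1/2} \\ y^{1/2} & x^{1/2} \end{smallmatrix}\bigr)$; the resulting element $\omega$ has off-diagonal entries $x^{3/2}y^{1/2}+x^{1/2}y^{3/2}$ whose norm is only controlled by $\|x+y\|\cdot\|x^{1/2}y^{1/2}\|$, and diagonal perturbations of size $\|x^{1/2}y^{1/2}\|^2$ --- this is exactly where the terms $\delta/\|x+y\|$ and $1$ in the minimum come from (so your closing speculation that they arise from building a conjugation witness is not quite the right diagnosis, but that is immaterial to your proof). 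Your choice $s=\bigl(\begin{smallmatrix} x^{1/2} & y^{1/2} \\ 0 & 0 \end{smallmatrix}\bigr)$ makes $ss^{*}$ \emph{exactly} the target $\operatorname{diag}(x+y,0)$ and $s^{*}s$ within distance exactly $\|x^{1/2}y^{1/2}\|$ of $\operatorname{diag}(x,y)$, so a single application of the R{\o}rdam perturbation lemma ($\|a-b\|<\eta$ with $a,b\geq 0$ implies $(a-\eta)_{+}\precsim b$) together with $s^{*}s\sim ss^{*}$ finishes the proof with $\delta=\varepsilon$, uniformly in $x$ and $y$. This buys two things: it avoids the squaring step and the multi-term norm estimate entirely, and it shows the hypothesis can be weakened to $\|x^{1/2}y^{1/2}\|<\varepsilon$ --- a uniformity that is in fact what is needed when the lemma is later applied in Lemma \ref{exa} with $x,y$ varying with $n$.
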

\begin{proof}
Since $\begin{pmatrix}
   x^2 & 0  \\
   0 & y^2
  \end{pmatrix}\sim\begin{pmatrix}
   x & 0  \\
   0 & y
  \end{pmatrix},$
for any $\varepsilon>0$, there exists $\delta>0$ such that
$$[\begin{pmatrix}
   x & 0  \\
   0 & y
  \end{pmatrix}-\varepsilon]_+\precsim[\begin{pmatrix}
   x^2 & 0  \\
   0 & y^2
  \end{pmatrix}-\delta]_+.$$
  Let
    \begin{align*}
  \omega &=\begin{pmatrix}
   x^{\frac{1}{2}} & y^{\frac{1}{2}}  \\
   y^{\frac{1}{2}} & x^{\frac{1}{2}}
  \end{pmatrix}\begin{pmatrix}
   x+y & 0  \\
   0 & 0
  \end{pmatrix}\begin{pmatrix}
   x^{\frac{1}{2}} & y^{\frac{1}{2}}  \\
   y^{\frac{1}{2}} & x^{\frac{1}{2}}
  \end{pmatrix}\\
  &=\begin{pmatrix}
   x^2+x^{\frac{1}{2}}yx^{\frac{1}{2}} & x^{\frac{3}{2}}y^{\frac{1}{2}}+x^{\frac{1}{2}}y^{\frac{3}{2}}  \\
   y^{\frac{1}{2}}x^{\frac{3}{2}}+y^{\frac{3}{2}}x^{\frac{1}{2}} & y^{\frac{1}{2}}xy^{\frac{1}{2}}+y^2
  \end{pmatrix}.
\end{align*}
Then 
\begin{align*}
&\|w-\begin{pmatrix}
x^2 & 0\\
0 & y^2
\end{pmatrix}\| \\
=&\| \begin{pmatrix}
x^{\frac{1}{2}}yx^{\frac{1}{2}} & x^{\frac{3}{2}}y^{\frac{1}{2}}+x^{\frac{1}{2}}y^{\frac{3}{2}}\\
y^{\frac{1}{2}}x^{\frac{3}{2}}+y^{\frac{3}{2}}x^{\frac{1}{2}} & y^{\frac{1}{2}}xy^{\frac{1}{2}}
\end{pmatrix}
\|<\delta,
\end{align*}
since $$\|x^{\frac{1}{2}}yx^{\frac{1}{2}}\|=\|(x^{\frac{1}{2}}y^{\frac{1}{2}})(x^{\frac{1}{2}}y^{\frac{1}{2}})^*\|=\|x^{\frac{1}{2}}y^{\frac{1}{2}}\|^2<\delta,$$
\begin{align*}
&\|x^{\frac{3}{2}}y^{\frac{1}{2}}+x^{\frac{1}{2}}y^{\frac{3}{2}} \|=\|x^{\frac{1}{2}}(x+y)y^{\frac{1}{2}} \| 
=\|(x+y)^{\frac{1}{2}}y^{\frac{1}{2}}x^{\frac{1}{2}}(x+y)^{\frac{1}{2}} \| \\
\leq&\|(x+y)^{\frac{1}{2}}\|^2\|x^{\frac{1}{2}}y^{\frac{1}{2}}\|=\|x+y\|\|x^{\frac{1}{2}}y^{\frac{1}{2}}\|<\delta.
\end{align*}
Therefore, $$[\begin{pmatrix}
   x & 0  \\
   0 & y
  \end{pmatrix}-\varepsilon]_+\precsim[\begin{pmatrix}
   x^2 & 0  \\
   0 & y^2
  \end{pmatrix}-\delta]_+\precsim \omega\precsim \begin{pmatrix}
   x+y & 0  \\
   0 & 0
  \end{pmatrix}.$$

\end{proof}

\begin{lemma}\label{fi}
Let $A$ be a $\text{C}^*$-algebra such that
$$0\rightarrow I \xrightarrow[ ]{\iota}A \xrightarrow[ ]{\pi} A/I\rightarrow 0$$
is a short exact sequence, where $I $ is a closed two-sided ideal of $A$.
Suppose $\{p_n\}$ is a quasi-central approximate unit of $I$ consisting of projections.
Let $a,b$ be two positive elements in $A.$
Then $[\pi(a)]\precsim [\pi(b)] \mbox{ in } A/I$ if and only if, for any $\varepsilon>0,$ there exists an integer $N>0$ such that
$$[(1-p_n)a(1-p_n)-\varepsilon]_+\precsim (1-p_n)b(1-p_n) $$
for all $n\geq N$.
\end{lemma}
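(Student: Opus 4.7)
The plan is to handle the two implications separately; the reverse is essentially immediate, while the forward direction requires an asymptotic-commutativity argument using $q_n = 1 - p_n$. For the reverse direction, apply the quotient map $\pi$ to the hypothesis $(q_n a q_n - \varepsilon)_+ \precsim q_n b q_n$. Since $p_n \in I$, $\pi(q_n) = 1_{A/I}$, so $\pi(q_n a q_n) = \pi(a)$ and $\pi(q_n b q_n) = \pi(b)$; because continuous functional calculus commutes with $\pi$, $\pi((q_n a q_n - \varepsilon)_+) = (\pi(a) - \varepsilon)_+$. Cuntz sub-equivalence passes through $*$-homomorphisms, so $(\pi(a) - \varepsilon)_+ \precsim \pi(b)$ in $A/I$ for every $\varepsilon > 0$, and Proposition \ref{equi} yields $\pi(a) \precsim \pi(b)$.

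For the forward direction, fix $\varepsilon > 0$ and pick $0 < \varepsilon_1 < \varepsilon$. By Proposition \ref{equi}(iv) applied in $A/I$, there exist $\delta > 0$ and $\bar y \in A/I$ with $(\pi(a) - \varepsilon_1)_+ = \bar y^*(\pi(b) - \delta)_+ \bar y$. Lift $\bar y$ to $y \in A$; since functional calculus commutes with $\pi$, the element $W := (a - \varepsilon_1)_+ - y^*(b - \delta)_+ y$ lies in $I$. Two consequences of the hypotheses on $\{p_n\}$ drive the rest: (i) because $\{p_n\}$ is an approximate unit for $I$, $\|q_n W q_n\| \to 0$; and (ii) because $\{p_n\}$ is quasi-central, for every $h \in A_+$ and $f \in C_0([0,\infty))$, $\|f(q_n h q_n) - q_n f(h) q_n\| \to 0$, proved by induction on $(q_n h q_n)^k - q_n h^k q_n \to 0$ followed by Stone--Weierstrass. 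A companion manipulation --- sliding $q_n$ past $y$ and $y^*$ via $[q_n, y] \to 0$, then applying (ii) to $(b-\delta)_+$ --- gives $\|q_n y^*(b-\delta)_+ y q_n - y^*(q_n b q_n - \delta)_+ y\| \to 0$.

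Combining these yields, for $n$ large, $\|(q_n a q_n - \varepsilon_1)_+ - y^*(q_n b q_n - \delta)_+ y\| < \varepsilon - \varepsilon_1$: indeed, $\|q_n(a-\varepsilon_1)_+ q_n - q_n y^*(b-\delta)_+ y q_n\| = \|q_n W q_n\| \to 0$, while (ii) and the companion manipulation handle the two substitutions $q_n(a-\varepsilon_1)_+ q_n \leftrightarrow (q_n a q_n - \varepsilon_1)_+$ and $q_n y^*(b-\delta)_+ y q_n \leftrightarrow y^*(q_n b q_n - \delta)_+ y$. The perturbation principle ($\|u - v\| < \eta \Rightarrow (u-\eta)_+ \precsim v$) then produces $(q_n a q_n - \varepsilon)_+ \precsim y^*(q_n b q_n - \delta)_+ y \precsim (q_n b q_n - \delta)_+ \precsim q_n b q_n$, the desired conclusion for all sufficiently large $n$. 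The main technical obstacle is ingredient (ii) together with the companion manipulation --- establishing that continuous functional calculus and insertions of $y, y^*$ all commute asymptotically with the compression $h \mapsto q_n h q_n$. Once these are in hand, the parameter bookkeeping is routine.
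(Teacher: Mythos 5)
Your proof is correct and follows the same overall strategy as the paper: the direction from the compressed condition to $\pi(a)\precsim\pi(b)$ is the identical one-line push through $\pi$ plus Proposition \ref{equi}, and for the converse both arguments lift an exact algebraic witness of $\pi(a)\precsim\pi(b)$ to $A$ and control the resulting error in $I$ via the quasi-central approximate unit of projections. The execution of the converse differs in a way worth noting. The paper takes $(\pi(a)-\varepsilon/2)_+=\pi(d)\pi(b)\pi(d)^*$ (no $\delta$-cutdown of $b$), writes the $n$-dependent error $c_n=((1-p_n)a(1-p_n)-\varepsilon/2)_+-d(1-p_n)b(1-p_n)d^*\in I$, compresses by $p_n$ to show $\|p_nc_np_n\|\to 0$, and infers $\|c_n\|\to 0$; this last inference is delicate, since for a sequence $c_n\in I$ varying with $n$ the approximate-unit property alone does not give $\|c_n-p_nc_np_n\|\to 0$. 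You instead fix a single element $W=(a-\varepsilon_1)_+-y^*(b-\delta)_+y\in I$, for which $\|(1-p_n)W(1-p_n)\|\to 0$ is immediate, and you pay for this with the (standard, and correctly sketched) asymptotic commutation of functional calculus and of multiplication by $y,y^*$ with the compression $h\mapsto(1-p_n)h(1-p_n)$; the extra $\delta$ from Proposition \ref{equi}(iv) then lets you finish with the perturbation lemma and $y^*(q_nbq_n-\delta)_+y\precsim q_nbq_n$. Your route is slightly longer on functional-calculus bookkeeping but avoids the weak step of the paper's argument, so it is the more robust of the two.
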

\begin{proof}
Necessity. If for any $\varepsilon>0,$ there exists an integer $N>0$ such that
$$[(1-p_n)a(1-p_n)-\varepsilon]_+\precsim (1-p_n)b(1-p_n) $$
for all $n\geq N$.
Then
\begin{align*}
(\pi(a)-\varepsilon)_+&=(\pi((1-p_N)a(1-p_N))-\varepsilon)_+\\
&=\pi((1-p_N)a(1-p_N)-\varepsilon)_+\\
&\precsim \pi((1-p_N)b(1-p_N))\\
&=\pi(b).
\end{align*}
Since the above relation holds for all $\varepsilon>0$, by Proposition \ref{equi}, $\pi(a)\precsim \pi(b)$.

Sufficiency. if $[\pi(a)]\precsim [\pi(b)] \mbox{ in } A/I$, then there exist $\{v_k\}_{k=1}^{\infty}$ in $A$,
such that $$[\pi(a)]=\lim\limits_{k\rightarrow \infty}[\pi(v_k)][\pi(b)][\pi(v_k)]^*.$$
Given $\varepsilon>0,$ we can find  $k_0$ such that $$\|[\pi(a)]-[\pi(v_{k_0})][\pi(b)][\pi(v_{k_0})]^*\|<\varepsilon/2.$$
Thus there exists $d\in A$ such that $$([\pi(a)]-\varepsilon/2)_+=[\pi(d)][\pi(b)][\pi(d)]^*.$$
Since $[\pi(a)]=[\pi((1-p_n)a(1-p_n))]$ and $[\pi(b)]=[\pi((1-p_n)b(1-p_n))]$ for all $n\in\mathbb{N}$, we can find $c_n\in I$ such that
$$((1-p_n)a(1-p_n)-\varepsilon/2)_+=d(1-p_n)b(1-p_n)d^*+c_n.$$
Multiplying $p_n$ from both left and right sides of the above equation, we get
$$p_n((1-p_n)a(1-p_n)-\varepsilon/2)_+p_n=p_nd(1-p_n)b(1-p_n)d^*p_n+p_nc_np_n.$$
Since $p_n$ is a quasi central approximate unit of $I$,
$$\lim\limits_n\|p_n((1-p_n)a(1-p_n)-\varepsilon/2)_+p_n\|=0,$$
$$\lim\limits_n\|p_nd(1-p_n)b(1-p_n)d^*p_n\|=0.$$
Therefore, $\lim\limits_n \|p_nc_np_n\|=0.$ Thus,
$\lim\limits_n \|c_n\|=0.$
Then we can find a natural number $N$ such that for all $n\geq N,$
$$\|((1-p_n)a(1-p_n)-\varepsilon/2)_+-d(1-p_n)b(1-p_n)d^*\|<\varepsilon/2.$$
Thus $$(((1-p_n)a(1-p_n)-\varepsilon/2)_+-\varepsilon/2)_+\precsim d(1-p_n)b(1-p_n)d^*$$
 for all $n\geq N.$ That is,
 $$((1-p_n)a(1-p_n)-\varepsilon)_+\precsim d(1-p_n)b(1-p_n)d^*\mbox{ for all }n\geq N.$$

\end{proof}

\begin{lemma}\label{exa}
Let $A$ be a separable $\text{C}^*$-algebra and $I$ be an ideal of $A$.
Suppose $\{p_n\}_{n=1}^{\infty}$ is a quasi-central approximate unit of $I$.
If $a,b$ are two positive elements in $A$ satisfying
$$\pi(a)\precsim \pi(b) \mbox{ and }p_nap_n\precsim p_nbp_n$$  for all sufficiently large $n.$
Then $a\precsim b$.
\end{lemma}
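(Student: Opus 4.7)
The strategy is to use the quasi-central approximate unit $\{p_n\}$ to split both $a$ and $b$ into an ``ideal'' block (compressed by $p_n$) and a ``quotient'' block (compressed by $1-p_n$), dominate each block separately by the corresponding block of $b$, and then reassemble. Fix $\varepsilon > 0$; by Proposition \ref{equi} it suffices to show $(a-\varepsilon)_+\precsim b$.

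The first ingredient is the quasi-central approximation
$$c_n^a:=p_n a p_n + (1-p_n) a (1-p_n),\qquad c_n^b:=p_n b p_n+(1-p_n) b(1-p_n),$$
which satisfies $\|a-c_n^a\|\to 0$ and $\|b-c_n^b\|\to 0$ because the cross terms $p_n a(1-p_n)+(1-p_n)a p_n$ vanish in the limit by quasi-centrality (the estimate is cleanest when $p_n$ is a projection, since then $a p_n(1-p_n)=0$ and the remainder equals $(p_n a-ap_n)(1-p_n)\to 0$). In that case the two summands in each $c$ are mutually orthogonal, so in the Cuntz semigroup
$$c_n^a\sim p_n a p_n\oplus (1-p_n) a(1-p_n),\qquad c_n^b \sim p_n b p_n \oplus (1-p_n)b(1-p_n),$$
and orthogonality is preserved when one applies the cutoff $(\,\cdot\, - \gamma)_+$.

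Next, the two hypotheses control each block. Directly, $p_n a p_n\precsim p_n b p_n$ for large $n$. For the quotient block, Lemma \ref{fi} applied to $\pi(a)\precsim\pi(b)$ yields, for every $\gamma>0$, an $N_\gamma$ with $((1-p_n)a(1-p_n)-\gamma)_+\precsim (1-p_n) b(1-p_n)$ whenever $n\ge N_\gamma$. Combining these block estimates with the orthogonal decomposition,
$$(c_n^a-\gamma)_+\sim (p_n a p_n-\gamma)_+\oplus\bigl((1-p_n)a(1-p_n)-\gamma\bigr)_+\precsim p_n b p_n\oplus (1-p_n) b (1-p_n)\sim c_n^b.$$

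Finally, choose $\gamma$ much smaller than $\varepsilon$ and $n$ so large that $\|a-c_n^a\|,\|b-c_n^b\|<\gamma$ and the displayed chain is in force. The standard perturbation fact ``$\|x-y\|<\eta\Rightarrow (x-\eta)_+\precsim y$'', together with Proposition \ref{equi}(iii) to pass cutoffs through Cuntz-subequivalences, then lets one chain $(a-\varepsilon)_+\precsim c_n^a$, $(c_n^a-\gamma)_+\precsim c_n^b$, and $(c_n^b-\gamma)_+\precsim b$ into a single relation $(a-\varepsilon)_+\precsim b$, as required.

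The principal obstacle is careful bookkeeping of the $\varepsilon$-budget across the iterated applications of Proposition \ref{equi}(iii) and the perturbation lemma; this is routine but delicate. A secondary difficulty, if one declines to assume the $p_n$ are projections, is that the exact orthogonality used above must be replaced by the quantitative approximate orthogonality of Lemma \ref{dir}, which requires verifying $\|(p_n a p_n)^{1/2}((1-p_n)a(1-p_n))^{1/2}\|\to 0$ --- automatic when $p_n$ is a projection, but an extra technical step in the general quasi-central case.
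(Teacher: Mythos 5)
Your block decomposition is the right idea, and the first two thirds of the argument are sound: the exact orthogonality of $p_nap_n$ and $(1-p_n)a(1-p_n)$, the use of the hypothesis for the ideal corner, and the appeal to Lemma \ref{fi} for the quotient corner (which the paper leaves implicit) all check out, giving $(c_n^a-\gamma)_+\precsim c_n^b$. The genuine gap is the final reassembly, which you defer as ``routine bookkeeping'': it is not routine, and as set up it cannot be closed. The element $c_n^b=p_nbp_n+(1-p_n)b(1-p_n)$ is a pinching of $b$, and a pinching is in general \emph{not} Cuntz below $b$: take $b$ a rank-one projection in $M_2(\mathbb{C})$ not commuting with the projection $p$; then $pbp+(1-p)b(1-p)$ has rank $2$. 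So the passage from $c_n^b$ to $b$ genuinely requires the cutoff $(c_n^b-\gamma)_+\precsim b$ with $\gamma\geq\|c_n^b-b\|$, and the trouble is that relations of the form ``$(u-\gamma)_+\precsim v$'' and ``$(v-\gamma)_+\precsim w$'' do not compose: Proposition \ref{equi}(iii) applied to the first one produces a cutoff $\delta$ on $v$ coming from the (unknown, $n$-dependent) witnesses of that subequivalence, and nothing forces $\delta\geq\gamma$. Nor can you let $n\to\infty$ and use $c_n^b\to b$, because upper sets $\{z:u\precsim z\}$ are not norm-closed (in $\mathbb{C}$ one has $1\precsim 1/n$ for every $n$ but $1\not\precsim 0$). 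Your chain therefore terminates at $c_n^b$, with no licit step from there to $b$.

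The paper's proof is engineered precisely to dodge this obstruction: instead of two exactly orthogonal corners that only approximately sum to $b$, it uses the two pieces $b-p_nbp_n$ and $p_nbp_n$, which are only approximately orthogonal but sum \emph{exactly} to $b$, and then invokes Lemma \ref{dir}, whose tolerance $\delta$ depends only on the prescribed cutoff $\varepsilon'$ and on $\|b\|$ --- so the final target of the chain is $b$ itself rather than an approximant of $b$. To repair your argument you would need to replace the perturbation step $(c_n^b-\gamma)_+\precsim b$ by an application of Lemma \ref{dir} to the pair $x=b^{1/2}p_nb^{1/2}$, $y=b^{1/2}(1-p_n)b^{1/2}$ (Cuntz equivalent to your two $b$-corners, with $x+y=b$ exactly and $\|x^{1/2}y^{1/2}\|\to 0$ by quasi-centrality), taking care to fix the cutoff fed into Lemma \ref{dir} before choosing $n$. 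As written, the proposal does not prove the lemma.
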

\begin{proof}
By assumption, we know that
\begin{align*}
a&=(a-p_nap_n)+p_nap_n\\
&\precsim \begin{pmatrix}
   a-p_nap_n & 0  \\
   0 & p_nap_n
 \end{pmatrix}\\
 &\precsim \begin{pmatrix}
   b-p_nbp_n & 0  \\
   0 & p_nbp_n
  \end{pmatrix}
  \end{align*}
  For any $\varepsilon>0$, there exists $\varepsilon'>0$ such that
  $$(a-\varepsilon)_+\precsim [\begin{pmatrix}
   b-p_nbp_n & 0  \\
   0 & p_nbp_n
  \end{pmatrix}-\varepsilon']_+.\quad (*)$$
  Since $\{p_n\}$ is a quasi-central  approximate unit for $I$,
  $$(b-p_nbp_n)p_nbp_n=(1-p_n)bp_nbp_n\rightarrow 0.$$
  For $\varepsilon'>0,$ applying Lemma \ref{dir}, there exists $\delta$ and $N\in\mathbb{N}$ such that: for all $n\geq N$ we have
$$\|(b-p_nbp_n)p_nbp_n\|<\min\{\delta,\frac{\delta}{\|b\|},1\}.$$
Therefore, 
$$[\begin{pmatrix}
  b-p_nbp_n & 0  \\
 0 & p_nbp_n
  \end{pmatrix}-\varepsilon']_+\precsim \begin{pmatrix}
   b & 0  \\
   0 & 0
  \end{pmatrix}.$$
  for all $n\geq N.$
  Combining with $(*)$, we get $$(a-\varepsilon)_+\precsim b\mbox{ for all }\varepsilon>0.$$
  Thus, $a\precsim b.$

\end{proof}

\section{ }
For a $\text{C}^*$-algebra $A$ with one ideal $I$ which has the projection property, we define  
$$\widetilde{W}(A)=V(A)\sqcup LAff(T(A))^{++}\sqcup LAff(T(I))^{++}$$ and a map 
$$\iota: W(A)\rightarrow \widetilde{W}(A)$$ by 
$$\iota(a)=\left\{
\begin{array}{ll}
[a] & \text{ if } a\in V(A)\\
d_{\cdot}(a) & \text{ if } a\in A_{++}\\
d_{\cdot}(a) & \text{ if } a\in \mathcal{P}(A/I)
\end{array}
\right.
$$

\begin{theorem}
$\iota$ is an isomorphism.
\end{theorem}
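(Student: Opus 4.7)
The plan is to split $W(A)$ into three disjoint strata matching the three pieces of $\widetilde{W}(A)$: classes of projections in $A$ (targeting $V(A)$), classes of $a \in A_{++}$ (targeting $LAff(T(A))^{++}$), and classes of $a \in A_+$ with $\pi(a)$ Cuntz equivalent to a projection of $A/I$ but $a$ itself not Cuntz equivalent to a projection in $A$ (targeting $LAff(T(I))^{++}$). Every positive element of $\text{M}_\infty(A)$ falls into exactly one of these, by the trichotomy afforded by the projection property of $I$. Well-definedness on each stratum is then immediate, since projection classes in $V(A)$ and dimension functions are Cuntz invariants; for the third stratum Proposition~\ref{disum} allows writing $a \sim p \oplus b$ with $p$ a lift of $\pi(a)$ and $b \in I_+$, so the genuinely new data sits naturally in $LAff(T(I))^{++}$.

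Injectivity is handled stratum by stratum. The $V(A)$-case is classical. For two elements of $A_{++}$, equality of dimension functions on $T(A)$ and the second lemma of Section~3 give $a \sim b$. For two elements in the $\mathcal{P}(A/I)$-stratum, the first lemma of Section~3, combined with equality of $d_\tau$ on $T(I)$ and the matching projection class in $V(A/I)$, yields the same conclusion. For surjectivity, elements of $V(A)$ are trivially realized; to realize a prescribed $f \in LAff(T(A))^{++}$ I invoke the Blackadar--Handelman correspondence (already used inside the proof of Lemma~\ref{MRL}) together with the projection property of $A$ to produce $a \in A_{++}$ with $d_{\cdot}(a) = f$. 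The analogous construction inside $I$, augmented by a lift of a projection from $A/I$, handles the $LAff(T(I))^{++}$ summand.

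For the order relation I proceed by cases on the strata of $a$ and $b$. Within a single stratum, the Section~3 lemmas and the standard theory of $V(A)$ do the job directly. For cross-stratum comparisons the strategy is to establish simultaneously that $\pi(a) \precsim \pi(b)$ in $A/I$ (which, via Lemma~\ref{fi}, becomes the approximate inequality $[(1-p_n)a(1-p_n) - \varepsilon]_+ \precsim (1-p_n)b(1-p_n)$ for large $n$, where $\{p_n\}$ is a quasi-central approximate unit of projections of $I$) and that $p_n a p_n \precsim p_n b p_n$ inside $I$, derived from the $T(I)$-data via Lemma~\ref{MRL} applied in the simple $\mathcal{Z}$-stable ideal; Lemma~\ref{exa} then reassembles these into $a \precsim b$. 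Additivity of $\iota$ follows by applying this order characterization to $a \oplus b$ against $a' \oplus b'$.

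The main obstacle will be the mixed cross-stratum case, where for instance $a \in \mathcal{P}(A/I)$ and $b \in A_{++}$, so that both coordinates of $\widetilde{W}(A)$ contribute jointly. One must exploit the slack afforded by passing to $(a - \varepsilon)_+$ in order to convert non-strict quotient-level comparisons into the strict inequality required by Lemma~\ref{MRL}, while simultaneously keeping the $T(I)$-trace data aligned. The existence of a quasi-central approximate unit of projections for $I$, ensured by the projection property, is precisely the mechanism that makes this bridging step go through, and verifying compatibility with the additive structure on $\widetilde{W}(A)$ in this mixed regime is where the bulk of the technical work is concentrated.
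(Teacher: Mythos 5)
The paper as written offers no proof of this theorem---Section 5 is a stub: the theorem is stated and the bibliography follows immediately---so there is nothing to compare your argument against line by line, and what follows assesses your outline on its own terms. Your plan is the natural Brown--Perera--Toms strategy adapted to one ideal, and you correctly identify the tools the paper has assembled for it (the two unlabelled lemmas of Section 3 for comparison against elements of $A_{++}$, Lemma \ref{fi} and Lemma \ref{exa} for reassembling quotient and ideal data, Proposition \ref{disum} for splitting off projections). But it is an outline rather than a proof, and the steps you defer are exactly the ones that can fail.

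Three concrete gaps. First, injectivity on the third stratum: for $a$ with $\pi(a)$ Cuntz equivalent to a nonzero projection $q$ of $A/I$, the datum $\iota(a)=d_{\cdot}(a)\in LAff(T(I))^{++}$ does not record the class $[q]\in V(A/I)$; your own argument appeals to ``the matching projection class in $V(A/I)$,'' which is not a coordinate of $\widetilde{W}(A)$. If $V(A/I)$ is not determined by traces, two elements $p_1\oplus b$ and $p_2\oplus b$ (with $p_i$ lifts of inequivalent projections $q_i$ having the same trace values, and $b\in I_+$ purely positive) have the same image under $\iota$ but are Cuntz inequivalent, since Cuntz equivalence passes to $A/I$; you must either show $[\pi(a)]$ is recoverable from the listed invariants or restrict the class of algebras. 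Relatedly, a nonzero $a\in I_+$ not equivalent to a projection has $\pi(a)=0$, and if your trichotomy places it in $A_{++}$ then $d_{\cdot}(a)$ vanishes at the trace with kernel $I$ and so does not lie in $LAff(T(A))^{++}$---even well-definedness of the stratification needs an argument. Second, surjectivity onto $LAff(T(A))^{++}$: Blackadar--Handelman identifies lower semicontinuous dimension functions with traces, i.e., it goes in the opposite direction; realizing a prescribed strictly positive lower semicontinuous affine function as $d_{\cdot}(a)$ is the hard range-of-the-invariant step of \cite{BPT} and requires strict comparison, $\mathcal{Z}$-stability, stable rank considerations, and so on, none of which you address in the non-simple setting. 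Third, $\widetilde{W}(A)$ is only a disjoint union of sets; until the addition mixing the three components is defined (as in \cite{BPT}, where a sum $[p]+f$ must be interpreted as $\hat{p}+f$ in the affine-function component), the assertion that $\iota$ is additive has no content, and the mixed-stratum order comparisons you flag as ``the bulk of the technical work'' are precisely where that definition is tested.
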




\begin{thebibliography}{10}
\bibitem{BH} B. Blackadar and D. Handelman, \emph{Dimension functions and traces on $C^*$-algebras}, J. Funct. Anal. 45(1982), 297-340.
\bibitem{BTZ} J. Bosa, G. Tornetta and J. Zacharias, \emph{Open projections and suprema in the Cuntz semigroup}.
\bibitem{BPT} N. P. Brown, F. Perera, A. S. Toms, The Cuntz semigroup, the Elliott conjecture, and dimension functions on $\text{C}^*$-
algebras, J. reine angew. Math. 621 (2008), 191–211.
\bibitem{E} G. A. Elliott, \emph{A classification of certain simple $C^*$-algebras}, Quantum and Non-
Commutative Analysis (editors, H. Araki et al.), Kluwer, Dordrecht, (1993), pp. 373-385.
\bibitem{EG} G. A. Elliott and G. Gong, \emph{On the classification of $C^*$-algebras of real rank zero. II.}
Ann. of Math. 144 (1996), 497-610.
\bibitem{EGL} G. A. Elliott, G. Gong, and L. Li, \emph{On the classification of simple inductive limit $C^*$-
algebra II. The isomorphism theorem.} Invent. Math. 168 (2007), 249-320.
\bibitem{ERS} G. A. Elliott, L. Robert and L. Santiago, \emph{The cone of lower semicontinuous traces on a $C^*$-algebra}, American Journal of Mathematics, Vol 133, No. 4, Aug. 2011, 969-1005.
\bibitem {JJ} K. Ji and C.L. Jiang, \emph{A complete classification
of $AI$ algebras with the ideal property}, Canad. J. Math. 63(2011), 381-412.
\bibitem{JW} C. Jiang and K. Wang, \emph{A complete classification of limits of splitting
interval algebras with the ideal property}, J. Ramanujan Math. Soc. 27, No.3 (2012) 305-354.
\bibitem{KR} E. Kirchberg and M. Rordam, \emph{Infinite non-simple C*-algebras: absorbing the Cuntz algebra $\mathcal{O}_{\infty}$}, Adv. Math. 167(2002), 195-264.
\bibitem{Lin book} Huaxin Lin, \emph{An Introduction to the classification of amenable $C^*$-algebras}, World Scientific.
\bibitem{MR1} M. R{\o}rdam, \emph{On the structure of simple $C^*$-algebras tensored with a {\rm{UHF}}-algebra. II.} J. Funct. Anal. 107(1992), 255-269.
\bibitem{MR2} M. R{\o}rdam, \emph{The stable and the real rank of $\mathcal{Z}$-absorbing $C^*$-algebras}, Int. J. Math. 15(2004), 1065-1084.
\bibitem{Pedersen} G. K. Pedersen, \emph{$C^*$-algebras and their automorphism groups}, Academic Press INC. (LONDON) LTD.
\bibitem{PT} F. Perera and A. S. Toms, \emph{Recasting the Elliott Conjecture}, Math. Ann. 338, (2007), pp. 669-702.
\bibitem {Ste} K. H. Stevens, \emph{The classification of certain
non-simple approximate interval algebras}, Fields Institute
Communications 20 (1998), 105-148.
\end{thebibliography}
\end{document}